\newtheorem{proposition}{Proposition}[section]
\newtheorem{theorem}[proposition]{Theorem}
\newtheorem{lemma}[proposition]{Lemma}
\theoremstyle{definition}
\newtheorem{definition}[proposition]{Definition}
\newtheorem{remark}[proposition]{Remark}
\numberwithin{equation}{section}
\def\R{\Bbb R}
\def\Dx{\Delta_x}
\def\Nx{\nabla_x}
\def\Dt{\partial_t}
\def\({\left(}
\def\){\right)}
\def\Cal{\mathcal}
\def\Bbb{\mathbb}
\begin{document}
\title[Strongly damped wave equation]{A note on a strongly damped wave equation with
fast growing nonlinearities}
\author[] {Varga Kalantarov${}^1$, and Sergey Zelik${}^2$}

\begin{abstract} A  strongly damped wave equation including the displacement depending
nonlinear damping term and nonlinear interaction function is considered.
The main aim of the note
is to show that under the standard dissipativity restrictions on the nonlinearities
involved the initial boundary value problem for the considered equation is globally
well-posed in the class
of sufficiently regular solutions and the semigroup generated by the problem possesses
a global attractor in the corresponding phase space.
These results are obtained for the nonlinearities
of an arbitrary polynomial growth and without the assumption that the considered
problem has a global Lyapunov function.
\end{abstract}

\subjclass[2000]{35B40, 35B45}
\keywords{strongly damped wave equations, attractors, supercritical nonlinearities}
\thanks{The work of  Varga Kalantarov was partially supported  by
the Scientific and Research Council of
Turkey, grant no. 112T934. }

\address{${}^1$ Department of mathematics, Ko{\c c} University,
\newline\indent Rumelifeneri Yolu, Sariyer 34450\newline\indent
Sariyer, Istanbul, Turkey}
\email{vkalantarov@ku.edu.tr}
\address{${}^2$ University of Surrey, Department of Mathematics, \newline
Guildford, GU2 7XH, United Kingdom.}

\email{s.zelik@surrey.ac.uk}
\maketitle
\section{Introduction}\label{s0}

In a bounded smooth domain $\Omega\subset\R^3$, we consider the following problem:
\begin{equation}\label{0.1}
\begin{cases}
\Dt^2u+f(u)\Dt u-\gamma\Dt\Dx u-\Dx u+g(u)=h,\ \ u\big|_{\partial\Omega}=0,\\
u\big|_{t=0}=u_0,\ \ \Dt u\big|_{t=0}=u_1.
\end{cases}
\end{equation}
Here $u=u(t,x)$ is unknown function, $\Dx$ is the Laplacian with respect to the variable $x$, $\gamma>0$ is a given dissipation parameter, $f$ and $g$ are given nonlinearities and $h\in L^2(\Omega)$ are given external forces.
We assume throughout of the paper that the nonlinearities $f$ and $g$ satisfy
\begin{equation}\label{0.2}
\begin{cases}
1.\ \  f,g\in C^1(R),\\
2.\ \ -C+\alpha|u|^p\le f(u)\le C_1(1+|u|^p),\\
3.\ \  -C+\alpha|u|^q\le g'(u)\le C(1+|u|^q),
\end{cases}
\end{equation}
where $p,q\ge0$ and $p+q>0$.
\par
Strongly damped wave equations of the form \eqref{0.1}  and similar equations
are of a great current interest, see
\cite{CaCho02,CCD1,DP,GM,Ka86,Khan2010,Khan2012,Ma,PS05,PZ06-1,YS09,We}
and references therein. The most studied is the case with only one
nonlinearity ($f\equiv0$), i.e. the problem of the form
\begin{equation}\label{0.1g}
\begin{cases}
\Dt^2u-\gamma\Dt\Dx u-\Dx u+g(u)=h,\ \ u\big|_{\partial\Omega}=0,\\
u\big|_{t=0}=u_0,\ \ \Dt u\big|_{t=0}=u_1.
\end{cases}
\end{equation}

 Even in this particular case, the equation has a lot of non-trivial and interesting features
 attracting the attention of many mathematicians, see \cite{CaCho02,CCD1,GM,Ka86,Ma,We}
 and references therein. For instance, it has been thought for a long time that, for the case of the solutions belonging to the so-called energy phase space, there is
 a critical growth exponent $q_{max}=4$ for the nonlinearity $g$  and that the properties
 of the solutions in the supercritical case $q>q_{max}$ are {\it principally} different
 from the subcritical case $q<q_{max}$. On the other hand, as has been shown already in \cite{We}
 that the problem
 \eqref{0.1g}, with nonlinear term satisfying just the condition
 $g'(s)\geq -C, \ \forall s\in\R$
  has a global unique solution
  belonging to the more regular phase space
\begin{equation}\label{0.smspace}
\Cal E_1:=[H^2(\Omega)\cap H^1_0(\Omega)]\times H^1_0(\Omega),\ \
\xi_u(t)\in\Cal E_1,\ \ t\ge0.
\end{equation}
Here and below $H^s(\Omega)$ stands for the usual Sobolev space of
distributions whose derivatives up to order $s$ belong to
$L^2(\Omega)$ and $H^s_0(\Omega)$ means the closure of
$C_0^\infty(\Omega)$ in $H^s(\Omega)$.
\par
So,  there is no critical growth exponent for 
the class of smooth solutions $\xi_u:=(u,\Dt u)\in \Cal E_1$. A dissipativity of
the semigroup generated by  problem \eqref{0.1g} in the phase space
$\Cal E_1 $ was shown in \cite{Ka88} and \cite{PZ06}, in \cite{PZ06}
regularity of the attractor of the semigroup was also established.
\par 
The global unique solvability, dissiptivity and asymptotic regularity of solutions of  \eqref{0.1g} {\it without} any growth restrictions 
 (just assuming that $g(u)$ satisfies 
\eqref{0.2} with arbitrary $q\in \R_+$)
 has been relatively recently established  in \cite{KaZ08} also for the case of solutions
belonging to the natural energy space
\begin{equation}\label{0.enspace}
\Cal E_{f=0}:=[H^1_0(\Omega)\cap L^{q+2}(\Omega)]\times L^2(\Omega).
\end{equation}
Thus, despite the expectations, even on the level of energy solutions there
is no critical exponent for the growth rate of $g$ and  the analytic and dynamic properties (existence and
uniqueness, dissipativity, asymptotic smoothing, attractors and
their dimension) look very similar for the cases $q<4$ and $q>4$.
This is related with the non-trivial monotonicity properties of the
equation considered in the space $L^2(\Omega)\times H^{-1}(\Omega)$,
see \cite{KaZ08} for more details.
\par
The alternative case when another nonlinearity vanishes $g\equiv0$ also
leads to essential simplifications.
Indeed, assuming that $h=0$ for simplicity   and introducing the new variable
$v(t):=\int_0^tu(s)\,ds$, we reduce \eqref{0.1} to
\begin{equation*}
\Dt^2v-\gamma\Dt v+F(\Dt v)-\Dx v=c,
\end{equation*}
where $c$ depends on the initial data and $F(u):=\int_0^u f(v)\,dv$.
Using e.g., the methods of \cite{Khan08}, one can show the absence of a critical
exponent for the growth rate of $f(u)$  in the energy  phase space.
\par
The situation becomes more complicated when both nonlinearities $f$ and $g$ are
presented in the equation and grow sufficiently fast since the methods developed to
treat the case of fast growing $g$ are hardly compatible with the methods for $f$
and vise versa. In particular, the problem of presence or absence of critical growth
exponents for the non-linearities $f$ and $g$ is still open here and, to the best of
our knowledge, a more or less complete theory for this equation (including existence
and uniqueness, dissipativity, asymptotic regularity, attractors, etc.) is built up only
for the case where $f$ and $g$ satisfy the growth restrictions $p\le 4$, $q\le4$
and additional monotonicity restriction
\begin{equation}\label{0.grad}
f(u)\ge0,\  \ u\in\R,
\end{equation}
see \cite{Khan2011} for the details.
\par
The main aim of these notes is to show that problem \eqref{0.1} is globally well-posed and dissipative at least in the class of the so-called strong solutions $\xi_u\in\Cal E_1$ {\it without} any restrictions on the growth exponents $p$ and $q$ and without the monotonicity assumption \eqref{0.grad}. To be more precise, the main result of the notes is the following theorem.

\begin{theorem}\label{Th0.main} Let $h\in L^2(\Omega)$ and the nonlinearities $f$ and $g$
satisfy assumptions \eqref{0.2}. Then, for every $\xi_u(0)\in \Cal E_1$,
there exists a unique strong solution $\xi_u\in C(\R_+,\Cal E_1)$ of \eqref{0.1}
and the following estimate holds:
\begin{equation}\label{0.4}
\|\xi_u(t)\|_{\Cal E_1}\le Q(\|\xi_u(0)\|_{\Cal E_1})e^{-\alpha t}+Q(\|h\|_{L^2})
\end{equation}
for some positive constant $\alpha$ and monotone function $Q$, where
$$
\|\xi_u(t)\|^2_{\Cal E_1}:=\|\Nx \Dt u(t)\|^2_{L^2}+\|\Dx u(t)\|^2_{L^2}.
$$
\end{theorem}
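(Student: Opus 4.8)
The plan is to obtain a priori estimates in $\Cal E_1$ by a carefully chosen energy multiplier, then to close the argument with a Galerkin/continuation scheme and a separate uniqueness argument. The principal difficulty is that the damping term $f(u)\Dt u$ has a nonlinearity $f$ of arbitrarily high polynomial growth, so it cannot be controlled as a lower-order perturbation; the key observation to exploit is that, because $f(u)\ge -C+\alpha|u|^p\ge -C$, the term $f(u)\Dt u$ has a \emph{good sign} when paired against $\Dt u$, and more importantly, differentiating the equation in time turns $\Dt(f(u)\Dt u)=f'(u)(\Dt u)^2+f(u)\Dt^2u$ into something whose bad part $f'(u)(\Dt u)^2$ must be absorbed using the parabolic smoothing coming from $-\gamma\Dt\Dx u$.

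First I would establish the basic energy estimate: multiply \eqref{0.1} by $\Dt u$ and integrate over $\Omega$. Writing $G(u):=\int_0^u g(s)\,ds$, this gives
\begin{equation*}
\frac12\frac{d}{dt}\(\|\Dt u\|_{L^2}^2+\|\Nx u\|_{L^2}^2+2(G(u),1)\)+\gamma\|\Nx\Dt u\|_{L^2}^2+(f(u)\Dt u,\Dt u)=(h,\Dt u).
\end{equation*}
Since $f(u)\ge -C$, the damping term is bounded below by $-C\|\Dt u\|_{L^2}^2$, which is harmless because it is dominated by $\gamma\|\Nx\Dt u\|_{L^2}^2$ via Poincaré; the dissipativity condition 3 in \eqref{0.2} makes $G(u)$ coercive (up to a constant) so the energy is equivalent to $\|\xi_u\|_{\Cal E_{f=0}}^2$. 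A standard Gronwall argument then yields dissipativity in the energy space, in particular a bound $\int_t^{t+1}\|\Nx\Dt u(s)\|_{L^2}^2\,ds\le Q(\|\xi_u(0)\|)e^{-\alpha t}+Q(\|h\|)$.

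Next comes the main step, the $\Cal E_1$ estimate. I would differentiate \eqref{0.1} in $t$, set $\theta:=\Dt u$, and multiply the resulting equation $\Dt^2\theta+f(u)\Dt\theta+f'(u)\theta^2-\gamma\Dt\Dx\theta-\Dx\theta+g'(u)\theta=0$ by $\Dt\theta-\Dx\theta$ (or by $\Dt\theta$ plus a suitable multiple). The dangerous terms are $(f'(u)\theta^2,\Dt\theta)$ and $(f'(u)\theta^2,\Dx\theta)$: here $f'(u)$ can grow like $|u|^{p-1}$, but one controls $\theta=\Dt u$ in high Lebesgue norms using the parabolic regularity $\theta\in L^2(H^1_0)\cap L^\infty(L^2)$ together with the embeddings in $\Omega\subset\R^3$, and $\|u\|_{L^\infty}$ is controlled by $\|\Dx u\|_{L^2}$ via $H^2\hookrightarrow L^\infty$ — which is exactly the quantity appearing in $\|\xi_u\|_{\Cal E_1}$. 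After integration by parts and interpolation, the bad terms get absorbed into $\gamma\|\Nx\Dt\theta\|_{L^2}^2$ plus a term like $P(\|\xi_u\|_{\Cal E_1})\|\Nx\Dt u\|_{L^2}^2$ with a polynomial $P$; combined with the integrable-in-time factor $\|\Nx\Dt u\|_{L^2}^2$ from the energy estimate, a Gronwall-type lemma (of the form $y'\le -\alpha y+\ell(t)Q(y)$ with $\int_t^{t+1}\ell\,ds$ small) closes the estimate \eqref{0.4}. One must also recover $\|\Dx u(t)\|_{L^2}$ by elliptic regularity applied to $-\Dx u=h-\Dt^2u-f(u)\Dt u+\gamma\Dt\Dx u-g(u)$, controlling the right-hand side in $L^2$; the term $g(u)\in L^2$ uses the $H^2\hookrightarrow L^\infty$ bound on $u$ again. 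I expect the verification that every such term truly absorbs — i.e. that no power of $\|\xi_u\|_{\Cal E_1}$ higher than what Gronwall tolerates appears — to be the genuine technical heart of the argument.

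Finally, existence follows by a Galerkin approximation in which the above a priori estimates are uniform in the approximation parameter, plus standard compactness/passage-to-the-limit; global existence follows since the $\Cal E_1$ bound \eqref{0.4} precludes blow-up. For uniqueness I would take two strong solutions $u_1,u_2$, write the equation for $w:=u_1-u_2$, and estimate in a lower-norm space (e.g. multiply by $\Dt w$ in $L^2$), using that both solutions are bounded in $\Cal E_1$ and hence in $L^\infty((0,T)\times\Omega)$ so that $f,g,f'$ evaluated along them are bounded and Lipschitz on the relevant range; Gronwall then gives $w\equiv0$. Continuity $\xi_u\in C(\R_+,\Cal E_1)$ follows from the energy-type identities combined with weak continuity, in the usual way.
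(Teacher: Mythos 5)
There is a genuine gap, and it sits at the very first step on which everything else in your scheme rests. You claim that the standard energy identity (multiplication by $\Dt u$) plus Poincar\'e gives dissipativity in the energy space, because $(f(u)\Dt u,\Dt u)\ge -C\|\Dt u\|^2_{L^2}$ can be ``dominated by $\gamma\|\Nx\Dt u\|^2_{L^2}$''. This fails twice. First, the constant $C$ in \eqref{0.2} is arbitrary and unrelated to $\gamma$ and to the first Dirichlet eigenvalue, so the absorption is simply not available in general (think of Van der Pol type $f(u)=u^3-u$ with small $\gamma$). Second, and more fundamentally, even if the absorption were possible it would only give $\frac d{dt}E\le C\|h\|^2$, i.e.\ at best linear growth of the energy, not the decay needed for \eqref{0.4}; to get an inequality of the form $\frac d{dt}\Cal E+\alpha\Cal E\le C$ one must bring the potential terms into the dissipation via a multiplier like $u$, and then the term $(f(u)\Dt u,u)$ (equivalently a contribution $\sim|u|^{p+2}$ in the modified energy) appears, which cannot be dominated by $(G(u),1)\sim|u|^{q+2}$ or $(g(u),u)$ when $p>q$. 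Since \eqref{0.grad} is not assumed, this is exactly the obstruction the paper highlights: it states explicitly that a dissipative estimate for weak energy solutions is not known, which is why the argument must be run at the $[H^2\cap H^1_0]\times L^2$ level with the nonstandard multiplier $v:=\Dt u-\gamma\Dx u+F(u)$. That multiplier produces the extra positive terms $(F(u),g(u))\sim|u|^{p+q+2}$ and $(|f(u)|+\gamma|g'(u)|,|\Nx u|^2)\gtrsim\|\Nx(|u|^{(p+2)/2})\|^2_{L^2}$, which are precisely what dominates the bad contribution of $f$ (cf.\ \eqref{1.good} and the two cases $q\ge p$, $p>q$ with the nonlinear Gronwall inequality); nothing in your proposal supplies a substitute for this mechanism.

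Because your claimed bound $\int_t^{t+1}\|\Nx\Dt u\|^2_{L^2}\,ds\le Q(\|\xi_u(0)\|)e^{-\alpha t}+Q(\|h\|)$ is unproved, the second step (differentiating the equation in time) has no uniformly small, integrable factor to feed into the Gronwall lemma, and the decay in \eqref{0.4} does not follow; in addition, that step produces the term $(f'(u)(\Dt u)^2,\Dt^2u)$, which you cannot control since $\Dt^2 u$ is not an a priori bounded quantity in your scheme, and the $L^\infty$ bound on $u$ you invoke is part of the quantity being estimated, making the argument circular without a prior dissipative $H^2\times L^2$ bound. Note that once such a bound is available (as in the paper), time differentiation is unnecessary: multiplying by $-\Dt\Dx u$ closes the $\Cal E_1$ estimate directly, since $f(u)$ and $g(u)$ are then bounded in $L^\infty$. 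Your uniqueness and Galerkin remarks are fine and essentially match the paper, but the core dissipativity argument is missing.
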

The proof of this theorem is given in Section \ref{s1}.
\par
The dissipative estimate \eqref{0.4} is strong enough to obtain the existence of a global
attractor $\Cal A$ for the considered system in the phase space $\Cal E_1$ and verify that
its smoothness is restricted by the regularity of the data $f$, $g$ and $h$ only, see
Section \ref{s2} for more details. Note also that, in contrast to the most part of papers
on the subject, we do not use the monotonicity assumption \eqref{0.grad}. As a result, the
equation does not possess any more a global Lyapunov function and the non-trivial dynamics
on the attractor becomes possible.
 For instance, our assumptions include the Van der Pole nonlinearities $f(u)=u^3-u$ and
 $g(u)=u$, so the time periodic orbits (and  chaotic dynamics) become possible.
 Another classical example with non-trivial dynamics is the so-called FitzHugh-Nagumo system:
\begin{equation}
\begin{cases} \Dt u=\Dx u-\phi(u)-v,\\
\Dt v=u-v.
\end{cases}
\end{equation}
Indeed, differentiating the first equation by $t$ and removing the variable $v$
 using the second equation, we obtain the equation
 \begin{equation}
 \Dt^2 u -\Dt\Dx u+\psi'(u)\Dt u-\Dx u+\psi(u)=0,
 \end{equation}
where $\psi (u)=u+ \phi(u)$. So, the FitzHugh-Nagumo
system is indeed a particular case of the  strongly damped wave equation of the form \eqref{0.1} .
\par
Thus, relaxing the monotonicity assumption \eqref{0.grad} indeed makes the
theory essentially more general and interesting.
  As a price to pay, we lose the control over the possible growth of weak energy solutions.
  Indeed, the energy equality for our problem reads
\begin{equation}\label{0.energy-eq}
\frac d{dt}\(\frac 12\|\Dt u\|^2_{L^2}+\frac12\|\Nx u\|^2_{L^2}+(G(u),1)-(h,u)\)+\gamma\|\Dt\Nx u\|^2_{L^2}+(f(u)\Dt u,\Dt u)=0
\end{equation}
(here and below $(u,v)$ stands for the classical inner product in $L^2(\Omega)$ and
$G(u):=\int_0^u g(v)\,dv$). We see that under the assumptions \eqref{0.2},
we have only the control $(f(u)\Dt u,\Dt u)\ge -C\|\Dt u\|^2_{L^2}$ which is enough to
prove the existence of weak energy  solutions but is {\it not sufficient} to verify that
they are globally bounded in time. Actually, we do not know how to obtain the dissipative
energy estimate on the level of weak energy solutions and by this reason have to consider
more smooth solutions $\xi_u\in \Cal E_1$.

\section{Main estimate}\label{s1}

The main aim of this section is to prove the key estimate \eqref{0.4}.
We start with slightly weaker dissipative estimate in the  space
$H^2(\Omega)\cap H^1_0(\Omega)\times L^2(\Omega)$.
In what follows to simplify notations we will denote by $C$ various constants that do not
depend on the initial data.
\begin{theorem}\label{Th1.main} Let the  conditions of the Theorem \ref{Th0.main} be satisfied
 and let $\xi_u\in C(\R_+,\Cal E_1)$ be a strong solution of \eqref{0.1}.
 Then the following estimate holds:
\begin{equation}\label{1.4}
\|\Dt u(t)\|_{L^2}^2+\|u(t)\|_{H^2}^2+\int_t^{t+1}\|\Dt u(s)\|^2_{H^1}\,ds\le Q(\|\Dt u(0)\|_{L^2}^2+\|u(0)\|_{H^2}^2)e^{-\alpha t}+Q(\|h\|_{L^2}),
\end{equation}
where the positive constant $\alpha$ and monotone function $Q$ which are independent of $\xi_u$.
\end{theorem}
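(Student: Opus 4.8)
The plan is to derive \eqref{1.4} via a carefully chosen energy functional combining the natural energy of the equation with a multiplier term, and to absorb the troublesome damping term $(f(u)\partial_t u,\partial_t u)$ using the regularity $u\in H^2\hookrightarrow L^\infty$ (valid in $\R^3$) together with the strong damping $-\gamma\partial_t\Dx u$. First I would test \eqref{0.1} with $\partial_t u$ to get the energy identity \eqref{0.energy-eq}; the sign-indefinite term only gives $(f(u)\partial_t u,\partial_t u)\ge -C\|\partial_t u\|_{L^2}^2$, which is not enough by itself. Then I would test \eqref{0.1} with $-\Dx u$ (equivalently, differentiate and use the $H^2$ structure), producing a term $\gamma\frac{d}{dt}\|\Nx u\|_{H^1}^2$-type contribution and, crucially, $+\|\Dx u\|_{L^2}^2$ with a good sign, at the cost of cross terms like $(f(u)\partial_t u,\Dx u)$, $(g(u),\Dx u)$ and $(\partial_t^2 u,\Dx u)$.

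The key functional I would use is something like
\begin{equation*}
\Cal L(t):=\tfrac12\|\partial_t u\|_{L^2}^2+\tfrac12\|\Nx u\|_{L^2}^2+(G(u),1)-(h,u)+\nu\bigl((\partial_t u,u)+\tfrac{\gamma}{2}\|\Nx u\|_{L^2}^2+\tfrac12(F(u),1)\bigr)
\end{equation*}
for a small parameter $\nu>0$, where $F(u)=\int_0^u f(v)\,dv$. Differentiating and using the equation, the term $\nu(\partial_t u,\partial_t u)$ appears with a good sign, the term $\nu(F(u),1)'=\nu(f(u)\partial_t u,1)$ combines with the multiplier contributions, and $\nu(u,\partial_t^2 u)=\nu\frac{d}{dt}(\partial_t u,u)-\nu\|\partial_t u\|_{L^2}^2$ is handled by the $u$-equation rewritten as $\partial_t^2 u = -f(u)\partial_t u+\gamma\partial_t\Dx u+\Dx u-g(u)+h$. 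After substitution, $\nu(u,\Dx u)=-\nu\|\Nx u\|_{L^2}^2$ feeds coercivity in $\Nx u$, and $\nu(u,\gamma\partial_t\Dx u)=-\gamma\nu\frac{d}{dt}\|\Nx u\|_{L^2}^2+\dots$ is absorbed by the explicit $\nu\frac{\gamma}{2}\|\Nx u\|_{L^2}^2$ term. The dangerous residual terms are $-\nu(u,g(u))$ and $-\nu(u,f(u)\partial_t u)$: the first is controlled by the dissipativity hypothesis \eqref{0.2}(3), which gives $g(u)u\ge \alpha'|u|^{q+2}-C$ up to lower order (so it actually helps), and the second, $-\nu(u f(u),\partial_t u)$, is estimated by Young's inequality as $\le\varepsilon\|\partial_t u\|_{H^1}^2+C_\varepsilon\|u f(u)\|_{L^2}^2$; since $|u f(u)|\le C(1+|u|^{p+1})$ and $u\in H^2(\Omega)\subset L^\infty(\Omega)$, this is bounded by $C_\varepsilon(1+\|u\|_{L^\infty}^{2p})\|u\|_{L^2}^2$, and the $\varepsilon\|\partial_t u\|_{H^1}^2$ is swallowed by the $\gamma\|\partial_t\Nx u\|_{L^2}^2$ dissipation coming from the base energy identity.

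The main obstacle is closing the estimate despite the nonlinear $L^\infty$-norm of $u$ appearing in these bounds: a naïve Gronwall gives only local-in-time control. To get the \emph{global} bound and the exponential decay, I would supplement the argument with a control of $\|u\|_{L^\infty}$ via $\|u\|_{H^2}$ and then run a Gronwall-type argument on $\Cal L(t)$ of the form $\frac{d}{dt}\Cal L+\beta\Cal L\le C$, where $\beta>0$ and $C$ depend on $\|h\|_{L^2}$ only — this requires showing that all the bad terms are dominated either by the dissipative terms $\|\partial_t u\|_{H^1}^2$, $\|\Dx u\|_{L^2}^2$, $(|u|^{q+2},1)$ present on the left, or are of strictly lower order than $\Cal L$ itself. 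The estimate $(f(u)\partial_t u,\partial_t u)\ge -C\|\partial_t u\|_{L^2}^2$ combined with interpolation $\|\partial_t u\|_{L^2}^2\le\varepsilon\|\partial_t u\|_{H^1}^2+C_\varepsilon\|\partial_t u\|_{H^{-1}}^2$ and a further $H^{-1}$ estimate on $\partial_t u$ (obtained by testing the equation with $(-\Dx)^{-1}\partial_t u$) is the delicate point that makes the whole scheme work; I expect this interplay between the $L^2$, $H^1$ and $H^{-1}$ norms of $\partial_t u$, exploiting the strong damping term to dominate the indefinite one, to be the technical heart of the proof. Finally, the integral term $\int_t^{t+1}\|\partial_t u(s)\|_{H^1}^2\,ds$ in \eqref{1.4} follows by integrating the differential inequality over $[t,t+1]$.
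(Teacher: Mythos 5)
There is a genuine gap, in fact two. First, your scheme never produces the pointwise-in-time $H^2$ bound that the theorem asserts: your functional $\Cal L$ lives at the energy level (it controls at most $\|\Dt u\|_{L^2}$, $\|\Nx u\|_{L^2}$, $(G(u),1)$, $(F(u),1)$), and the term $\|\Dx u\|_{L^2}^2$ you obtain from testing with $-\Dx u$ appears only as a dissipative term under a time integral, not inside a quantity whose value at time $t$ you control. The paper gets the $H^2$ bound by a different mechanism: it multiplies the equation by $v:=\Dt u-\gamma\Dx u+F(u)$, derives a dissipative estimate for $\|v\|_{L^2}$, and then uses maximal regularity for the semilinear heat equation $\Dt u-\gamma\Dx u+F(u)=v$ to convert the $L^2$ bound on $v$ into the bound on $\|\Dt u\|_{L^2}^2+\|u\|_{H^2}^2$ (estimate \eqref{1.vu}). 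Nothing in your proposal plays this role.

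Second, and more fundamentally, the closure $\frac d{dt}\Cal L+\beta\Cal L\le C(\|h\|_{L^2})$ is not justified. Your bounds on the dangerous terms, e.g. $\|uf(u)\|_{L^2}^2\le C(1+\|u\|_{L^\infty}^{2p})\|u\|_{L^2}^2$, involve $\|u\|_{L^\infty}\lesssim\|u\|_{H^2}$, which is exactly the quantity you are trying to bound globally; these terms are of order $|u|^{2p+2}$ and are neither of lower order than $\Cal L$ nor dominated by any dissipative term available in your scheme ($\|\Dt u\|_{H^1}^2$, $\|\Dx u\|_{L^2}^2$, $(|u|^{q+2},1)$ are all of too low a degree when $p$ is large), so the circularity you flag is not resolved but only restated. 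Likewise, the $H^{-1}$ device (testing with $(-\Dx)^{-1}\Dt u$), which works when $f\equiv0$, produces here the sign-indefinite term $(f(u)\Dt u,(-\Dx)^{-1}\Dt u)$ that you cannot control for fast-growing $f$; this is precisely why that route is not available. The paper's resolution is the choice of multiplier $v=\Dt u-\gamma\Dx u+F(u)$: it generates the coercive cross terms $(F(u),g(u))\sim|u|^{p+q+2}$ and $(|f(u)|+\gamma|g'(u)|,|\Nx u|^2)$ (positive at infinity because $p+q>0$), which dominate the $(f(u)\Dt u,u)$-type contributions via \eqref{1.good}, with the two cases $q\ge p$ and $p>q$ (the latter extracting $\|F(u)\|_{L^2}^{(p+2)/(p+1)}$ from the gradient term via the embedding $H^1_0\subset L^4$) closing a Gronwall-type inequality without any growth restriction. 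Without an ingredient of this kind your differential inequality cannot be closed for arbitrary $p$, $q$.
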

\begin{proof} The proof of this estimate is strongly based on  the new estimate obtained by multiplication of \eqref{0.1} by $v:=\Dt u-\gamma\Dx u+F(u)$, where $F(u):=\int_0^uf(v)\,dv$. It is not difficult to show that under the above assumptions on the solution $\xi_u$, $v\in L^2(\Omega)$ and the multiplication is allowed.
 Then, after the straightforward transformations, we get
\begin{multline}\label{1.5}
\frac d{dt}\(\frac12\|v\|^2_{L^2}+\frac12\|\Nx u\|^2_{L^2}+(G(u),1)\)+\\+\gamma\|\Dx u\|^2_{L^2}+(f(u)+\gamma g'(u),|\Nx u|^2)+(F(u),g(u))=(h,v).
\end{multline}
In addition, due to \eqref{0.2} and the assumption $p+q>0$,
\begin{multline}
(f(u)+g'(u),|\Nx u|^2)+(F(u),g(u))\ge\\\ge
(|f(u)|+|g'(u)|,|\Nx u|^2)+\frac12(|F(u)|+1,|g(u)|+1)- C(\|\Nx u\|^2 +\|u\|^2+1)
\end{multline}
and using the interpolation $\|\Nx u\|^2_{L^2}\le\|\Dx u\|_{L^2}\|u\|_{L^2}$, we have
\begin{multline}\label{1.6}
\frac d{dt}\(\|v\|^2_{L^2}+\|\Nx u\|^2_{L^2}+2(G(u),1)\)+\\+\gamma\|\Dx u\|^2_{L^2}+(|f(u)|+\gamma|g'(u)|,|\Nx u|^2)+(|F(u)|+1,|g(u)|+1)\le C+2\|h\|_{L^2}\|v\|_{L^2}.
\end{multline}
This estimate is still not enough to get the desired dissipative estimate since we do not have the positive term related
with $\|v\|_{L^2}$ without the differentiation.
\par
At the next step, we use the energy equality \eqref{0.energy-eq} which is obtained by multiplication of \eqref{0.1} by $\Dt u$ and which together with our assumptions on $f$ gives
\begin{multline}\label{1.7}
\frac{d}{dt}\(\frac12\|\Dt u\|^2_{L^2}+\frac12\|\Nx u\|^2_{L^2}+(G(u),1)\)+\\+(|f(u)|+1,|\Dt u|^2)+\gamma\|\Dt\Nx u\|^2_{L^2}\le L(\|\Dt u\|^2_{L^2}+\|h\|^2_{L^2})
\end{multline}
for some positive constant $L$.
\par
To estimate the term in the right-hand side of \eqref{1.7}, we multiply equation \eqref{0.1} by $u$ which gives
\begin{equation}\label{1.8}
\|\Dt u\|^2_{L^2}=\frac d{dt}\((u,\Dt u)+\frac\gamma2\|\Nx u\|^2_{L^2}\)+\|\Nx u\|^2_{L^2}+(g(u),u)+(f(u)\Dt u,u)-(h,u).
\end{equation}
Note that, due to our assumptions \eqref{0.2} on the nonlinearity $f$, for any $\beta>0$,
\begin{equation}
|(f(u)\Dt u,u)|\le \beta(|f(u)|,|\Dt u|^2)+C_\beta(|f(u)|+1,u^2+1)
\end{equation}
Using now the  assumption that $p+q>0$, we see that $|f(u)|u^2\sim |u|^{p+2}$  and  $|F(u)g(u)|\sim |u|^{p+2+q}$ as $u\to\infty$, therefore
\begin{equation}\label{1.good}
|(f(u)\Dt u,u)|\le \beta(|f(u)|,|\Dt u|^2)+C_\beta(|F(u)|+1,|g(u)|+1)+C_\beta.
\end{equation}
Inserting this into the right-hand side of \eqref{1.7} and fixing $\beta>0$ being small enough,
we arrive at
\begin{multline}\label{1.9}
\frac{d}{dt}\(\frac12\|\Dt u\|^2_{L^2}+\frac12\|\Nx u\|^2_{L^2}+(G(u),1)-
L((u,\Dt u)+\frac\gamma2\|\Nx u\|^2_{L^2})\)+\\+\frac12(|f(u)|+1,|\Dt u|^2)+\gamma\|\Dt\Nx u\|^2_{L^2}\le
CL(|F(u)|+1,|g(u)|+1)+C(\|\Nx u\|^2_{L^2}+\|h\|^2_{L^2}+1).
\end{multline}
Taking a sum of \eqref{1.9} multiplied by a small parameter $\kappa>0$ with \eqref{1.6}, we have
\begin{multline}\label{1.10}
\frac d{dt}\(\|v\|^2_{L^2}+(1+\frac\kappa2-L\frac{\kappa\gamma}2)\|\Nx u\|^2_{L^2}+\right.\\\left.+(2+\kappa)(G(u),1)-L\kappa(u,\Dt u)+\frac\kappa2\|\Dt u\|^2_{L^2}\)+\\+
\beta\(\|\Dt u\|^2_{L^2}+\|\Dx u\|^2_{L^2}+(|f(u)|+1,|\Dt u|^2)+\right.\\\left.+(|f(u)|+\gamma|g'(u)|,|\Nx u|^2)+(|F(u)|+1,|g(u)|+1)+\|\Dt\Nx u\|^2_{L^2}\)\le\\\le C(\|h\|_{L^2}\|v\|_{L^2}+\|h\|^2_{L^2}+1).
\end{multline}
for some positive constant $\beta$ depending on $\kappa$. We now note that it is possible to fix $\kappa$ being small enough that the function
\begin{equation}
\Cal E_u(t):=\|v\|^2_{L^2}+(1+\frac\kappa2-L\frac{\kappa\gamma}2)\|\Nx u\|^2_{L^2}+(2+\kappa)(G(u),1)-L\kappa(u,\Dt u)+\frac\kappa2\|\Dt u\|^2_{L^2}
\end{equation}
will satisfy the inequalities
\begin{multline}\label{1.11}
\alpha\(\|v\|^2_{L^2}+\|\Dt u\|^2_{L^2}+\|\Nx u\|^2_{L^2}+(|G(u)|,1)\)-C_1\le
 \Cal E_u(t)\le\\\le C\(1+\|v\|^2_{L^2}+\|\Dt u\|^2_{L^2}+\|\Nx u\|^2_{L^2}+(|G(u)|,1)\)
\end{multline}
for some positive $\alpha$. Thus, \eqref{1.10} reads
\begin{multline}\label{1.12}
\frac d{dt}\Cal E_u(t)+
\beta\(\|\Dt u\|^2_{L^2}+\|\Dx u\|^2_{L^2}+(|f(u)|+1,|\Nx u|^2)+(|F(u)|+1,|g(u)|+1)\)+\\+\beta(|f(u)|+1,|\Dt u|^2)\le C(\|h\|_{L^2}\|v\|_{L^2}+\|h\|^2_{L^2}+1).
\end{multline}
Let now $q\ge p$. Then $$(|F(u)|+1)(|g(u)|+1)\sim|u|^{p+q+2}\ge|u|^{2p+2}\sim F(u)^2, $$
and using the obvious estimate
\begin{equation}
\|v\|^2_{L^2}\le C(\|\Dt u\|^2_{L^2}+\|\Dx u\|^2_{L^2}+\|F(u)\|^2_{L^2}),
\end{equation}
we see that, in the case $q\ge p$, \eqref{1.12} implies
\begin{equation}\label{1.13}
\frac d{dt}\Cal E_u(t)+\beta \Cal E_u(t)\le C(\|h\|^2_{L^2}+1)
\end{equation}
for some positive $\beta$.
\par
It only remains to study the case $p>q$. In this case, we extract the desired $L^2$ norm of
$F(u)$ from the term $(|f(u)|+1,|\Nx u|^2)$. Indeed
\begin{equation}\label{26}
(|f(u)|+1,|\Nx u|^2)\ge \alpha(|u|^p,|\Nx u|^2)=\alpha_1\|\Nx(|u|^{(p+2)/2})\|^2_{L^2}.
\end{equation}
Since $H^1_0(\Omega)$ is continuously embedded into $L^4(\Omega)$ we have
$$
\alpha_1\|\Nx(|u|^{(p+2)/2})\|^2_{L^2}\geq
\alpha_3\||u|^{p+2}\|_{L^2},
$$
 and we obtain from \eqref{26} that
$$
(|f(u)|+1,|\Nx u|^2)\ge
 \alpha_3\|F(u)\|_{L^2}^{\frac{p+2}{p+1}}-C
$$
for some positive $\alpha_i, \ i=1,2,3$.
 Thus, \eqref{1.12} now reads
\begin{equation}\label{1.14}
\Dt \Cal E_u(t)+\beta[\Cal E_u(t)]^{\frac{p+2}{2(p+1)}}\le
C(\|h\|_{L^2}[\Cal E_u(t)]^{1/2}+\|h\|^2_{L^2}+1).
\end{equation}
Since $\frac{p+2}{2(p+1)}>\frac12$, the Gronwall type inequality works in both cases
and gives the dissipative estimate for $\Cal E_u(t)$:
\begin{equation}\label{1.edis}
\Cal E_u(t)\le Q(\Cal E_u(0))e^{-\alpha t}+Q(\|h\|_{L^2}),
\end{equation}
where the positive constant $\alpha$ and monotone function $Q$ are independent of $t$
and $u$. Note also that, due to the maximal regularity result for the semilinear
heat equation,
\begin{equation}\label{1.vu}
C(\|\Dt u(t)\|^2_{L^2}+\|u(t)\|^2_{H^2})-C\le\|v(t)\|^2_{L^2}\le Q(\|\Dt u(t)\|^2_{L^2}+
\|u(t)\|^2_{H^2})
\end{equation}
for some positive $C$ and $Q$. The desired estimate \eqref{0.4}
follows in a straightforward way from \eqref{1.edis} and \eqref{1.vu}.
Thus, Theorem \ref{Th1.main} is proved.
\end{proof}
Next proposition gives the uniqueness of the strong solution of equation \eqref{0.1}.

\begin{proposition}\label{Prop1.unique} Let the conditions of the Theorem \ref{Th0.main}
 hold and let
$\xi_{u_1},\xi_{u_2}\in C(\R_+,\Cal E_1)$ be two solutions of the problem \eqref{0.1}.
Then, the following estimate holds:
\begin{equation}\label{1.lip}
\|\xi_{u_1}(t)-\xi_{u_2}(t)\|_{\Cal E}\le Ce^{Kt}\|\xi_{u_1}(0)-\xi_{u_2}(0)\|_{\Cal E},
\end{equation}
where the constants $C$ and $K$ depend on the initial data and $\|\xi_u\|^2_{\Cal E}:=\|\Nx u\|^2_{L^2}+\|\Dt u\|^2_{L^2}$.
\end{proposition}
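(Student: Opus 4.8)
The plan is to derive a Gronwall-type inequality for the energy of the difference $w:=u_1-u_2$, exploiting the fact that strong solutions are bounded in $L^\infty$. Subtracting the two copies of \eqref{0.1}, we see that $w$ solves, with homogeneous Dirichlet data,
\begin{equation*}
\Dt^2 w-\gamma\Dt\Dx w-\Dx w=-\bigl(f(u_1)\Dt u_1-f(u_2)\Dt u_2\bigr)-\bigl(g(u_1)-g(u_2)\bigr).
\end{equation*}
Multiplying by $\Dt w$ — legitimate for strong solutions, since $\Dt w\in H^1_0(\Omega)$ and the right-hand side lies in $L^2(\Omega)$ — and integrating over $\Omega$, I would obtain
\begin{equation*}
\frac{d}{dt}\Bigl(\tfrac12\|\Dt w\|_{L^2}^2+\tfrac12\|\Nx w\|_{L^2}^2\Bigr)+\gamma\|\Dt\Nx w\|_{L^2}^2=-\bigl(f(u_1)\Dt u_1-f(u_2)\Dt u_2,\Dt w\bigr)-\bigl(g(u_1)-g(u_2),\Dt w\bigr),
\end{equation*}
so that everything reduces to absorbing the right-hand side into $E_w(t):=\|\Dt w(t)\|_{L^2}^2+\|\Nx w(t)\|_{L^2}^2$ up to a locally integrable time factor.

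The key point is that, since $\Omega\subset\R^3$, we have $H^2(\Omega)\subset C(\overline\Omega)$, so Theorem \ref{Th1.main} yields a bound $\|u_i(t)\|_{L^\infty}\le R$ with $R$ depending only on the data; hence $f$ and $g$ are Lipschitz on $[-R,R]$ and $|f(u_1)-f(u_2)|+|g(u_1)-g(u_2)|\le C_R|w|$ pointwise, while $|f(u_1)|\le C_R$. The $g$-term is then controlled by $C_R\|w\|_{L^2}\|\Dt w\|_{L^2}\le C\,E_w(t)$ via the Poincaré inequality. For the $f$-term I would split $f(u_1)\Dt u_1-f(u_2)\Dt u_2=f(u_1)\Dt w+(f(u_1)-f(u_2))\Dt u_2$; the first piece contributes $-(f(u_1),|\Dt w|^2)$, bounded by $C_R\|\Dt w\|_{L^2}^2$, and the second is estimated, using $H^1_0(\Omega)\subset L^6(\Omega)$, by
\begin{equation*}
\bigl|\bigl((f(u_1)-f(u_2))\Dt u_2,\Dt w\bigr)\bigr|\le C_R\|w\|_{L^6}\|\Dt u_2\|_{L^6}\|\Dt w\|_{L^2}\le C_R\|\Dt u_2\|_{H^1}E_w(t).
\end{equation*}
Dropping the nonnegative term $\gamma\|\Dt\Nx w\|_{L^2}^2$, this gives $\frac{d}{dt}E_w(t)\le\bigl(C+C_R\|\Dt u_2(t)\|_{H^1}\bigr)E_w(t)$.

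It then remains to apply Gronwall's lemma. A priori the coefficient $K(t):=C+C_R\|\Dt u_2(t)\|_{H^1}$ is only $L^1_{loc}$ in time, but the parabolic smoothing term in \eqref{1.4} gives $\int_t^{t+1}\|\Dt u_2(s)\|_{H^1}^2\,ds\le C_0$ uniformly in $t\ge0$, with $C_0$ depending only on the data, so by Cauchy--Schwarz $\int_0^t K(s)\,ds\le K_0(1+t)$. Hence $E_w(t)\le e^{K_0}E_w(0)\,e^{K_0 t}$, and taking square roots yields \eqref{1.lip} with $\Cal E=H^1_0(\Omega)\times L^2(\Omega)$, $C=e^{K_0/2}$, $K=K_0/2$, all depending only on $\|\xi_{u_1}(0)\|_{\Cal E_1}$, $\|\xi_{u_2}(0)\|_{\Cal E_1}$ and $\|h\|_{L^2}$. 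Conceptually the argument is routine once Theorem \ref{Th1.main} is in hand; the only genuine technical points are the rigorous justification of the energy identity for a difference of strong solutions (i.e.\ $\tfrac12\tfrac{d}{dt}\|\Dt w\|_{L^2}^2=\langle\Dt^2 w,\Dt w\rangle$ in the $H^{-1}$--$H^1_0$ duality, obtained by a density/regularization argument) and the treatment of the cross term $(f(u_1)-f(u_2))\Dt u_2$, which is the only place where the extra regularity $\Dt u\in H^1_0$ is used in an essential way and which forces the Gronwall coefficient to depend on $\|\Dt u_2\|_{H^1}$.
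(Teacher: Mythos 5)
Your proof is correct and follows essentially the same route as the paper: an energy estimate for the difference multiplied by $\Dt w$, using the $L^\infty$ bound from Theorem \ref{Th1.main} (via $H^2(\Omega)\subset C(\overline\Omega)$) and the uniform bound on $\int_t^{t+1}\|\Dt u_2\|_{H^1}^2\,ds$ to close a Gronwall argument. The only (inessential) difference is that the paper absorbs $\|\Dt v\|_{L^4}$ into the strong damping term $\gamma\|\Nx\Dt v\|^2_{L^2}$, obtaining the Gronwall coefficient $\|\Nx\Dt u_2\|^2_{L^2}$ directly, whereas you use an $L^6$--$L^6$--$L^2$ H\"older splitting and a Cauchy--Schwarz in time.
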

\begin{proof}
Indeed, let $v:=u_1-u_2$. Then, this function solves
\begin{equation}\label{1.dif}
\Dt^2 v+f(u_1)\Dt v-\gamma\Dx\Dt v-\Dx v=-[f(u_1)-f(u_2)]\Dt u_2-[g(u_1)-g(u_2)].
\end{equation}
Multiplying this equation by $\Dt v$ and using the estimate \eqref{1.4}
together with the embeddings $H^2(\Omega)\subset C(\Omega)$ and
$H^1(\Omega)\subset L^4(\Omega)$, we have
\begin{multline}
\frac12\frac d{dt}\|\xi_v\|^2+\gamma\|\Nx\Dt v\|^2_{L^2}+(f(u_1)\Dt v,\Dt v)=-\\
-([f(u_1)-f(u_2)]\Dt u_2,\Dt v)-(g(u_1)-g(u_2),\Dt v)\le\\\le C\|v\|_{L^2}
\|\Dt v\|_{L^4}\|\Dt u_1\|_{L^4}+C\|v\|_{L^2}\|\Dt v\|_{L^2}\le\\\le \frac
\gamma2\|\Nx \Dt v\|^2_{L^2}+\|\Nx\Dt u_1\|^2_{L^2}\|v\|^2_{L^2}.
\end{multline}
Thus, we end up with the following inequality
\begin{equation}
\frac d{dt}\|\xi_v\|^2+\gamma\|\Nx\Dt v\|^2_{L^2}\le C\|\Nx \Dt u_2\|^2_{L^2}
\|\xi_v\|^2_{L^2}
\end{equation}
and the Gronwall inequality applied to this relation finishes the proof of the proposition.
\end{proof}
We are now ready to check the dissipativity in $\Cal E_1$.
\begin{proposition}\label{Prop1.e1dis} Let the conditions of the Theorem \ref{Th0.main}
be satisfied. Then, for every $\xi_u(0)\in \Cal E_1$, there is a unique
solution $\xi_u\in C(\R_+,\Cal E_1)$ of the problem \eqref{0.1} and the
following estimate holds:
\begin{equation}\label{1.e1dis}
\|\xi_u(t)\|_{\Cal E_1}+\int_t^{t+1}\|\Dt u(s)\|^2_{H^2}\,ds\le
Q(\|\xi_u(0)\|_{\Cal E_1})e^{-\alpha t}+Q(\|h\|_{L^2}),
\end{equation}
for some positive constant $\alpha$ and monotone function $Q$.
\end{proposition}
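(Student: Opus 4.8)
The plan is to upgrade the estimate of Theorem~\ref{Th1.main} by multiplying \eqref{0.1} by $-\Dx\Dt u$ (equivalently, by differentiating the equation once in time). The decisive observation is that Theorem~\ref{Th1.main} already provides a dissipative bound for $u$ in $H^2(\Omega)\cap H^1_0(\Omega)$; since $H^2(\Omega)\subset C(\overline\Omega)$ in dimension three, this means $u$ is bounded in $L^\infty(\R_+\times\Omega)$ with a dissipative bound, and hence $f(u)$, $f'(u)$, $g'(u)$ are bounded in $L^\infty(\R_+\times\Omega)$ by $Q(\|\xi_u(0)\|_{\Cal E_1})e^{-\alpha t}+Q(\|h\|_{L^2})$. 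In other words, once this higher a priori bound for $u$ is in hand, the nonlinear terms are no longer supercritical and the higher-order estimate becomes routine. I would also record from Theorem~\ref{Th1.main} that $\|\Dt u(t)\|_{L^2}$ and $\|u(t)\|_{H^1}$ are dissipatively bounded.

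First I would multiply \eqref{0.1} by $-\Dx\Dt u$. This manipulation is formal, since $\Dt u\in H^2$ is not part of the phase space $\Cal E_1$; it is justified in the usual way by performing the computation on Galerkin approximations, deriving the bound uniformly in the approximation parameter, and passing to the limit. Using $f(u)\ge -C$ to bound $(f(u)\Nx\Dt u,\Nx\Dt u)$ from below, one obtains
\begin{equation*}
\frac d{dt}\Cal F(t)+\gamma\|\Dx\Dt u\|^2_{L^2}\le C\|\Nx\Dt u\|^2_{L^2}+|(f'(u)\Nx u\,\Dt u,\Nx\Dt u)|+|(g'(u)\Nx u,\Nx\Dt u)|,
\end{equation*}
where $\Cal F(t):=\tfrac12\|\Nx\Dt u\|^2_{L^2}+\tfrac12\|\Dx u\|^2_{L^2}-(h,\Dx u)$ (note that after this multiplication the $h$-term is an exact derivative), and $\Cal F(t)$ is equivalent to $\|\xi_u(t)\|^2_{\Cal E_1}$ modulo the lower-order quantity $\|h\|^2_{L^2}$.

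Next I would estimate the right-hand side. The term $(g'(u)\Nx u,\Nx\Dt u)$ is harmless, since $\|g'(u)\|_{L^\infty}$ is bounded and $\|\Nx u\|_{L^2}$ is dissipatively bounded. The only term requiring care is $(f'(u)\Nx u\,\Dt u,\Nx\Dt u)$, produced by $\Dt(f(u)\Dt u)$; here I would use $\|\Nx u\|_{L^6}\le C\|u\|_{H^2}$ (bounded), the 3D embedding $H^1_0(\Omega)\subset L^6(\Omega)$, and the interpolations $\|\Dt u\|_{L^3}\le C\|\Dt u\|^{1/2}_{L^2}\|\Nx\Dt u\|^{1/2}_{L^2}$ and $\|\Nx\Dt u\|_{L^2}\le\|\Dx\Dt u\|^{1/2}_{L^2}\|\Dt u\|^{1/2}_{L^2}$, which give
\begin{equation*}
|(f'(u)\Nx u\,\Dt u,\Nx\Dt u)|\le C(\|u\|_{H^2})\,\|\Dt u\|^{1/2}_{L^2}\|\Nx\Dt u\|^{3/2}_{L^2}\le\frac\gamma4\|\Dx\Dt u\|^2_{L^2}+C\|\Dt u\|^2_{L^2};
\end{equation*}
note that it is the full $\|\Dx\Dt u\|^2_{L^2}$ damping, not merely $\|\Nx\Dt u\|^2_{L^2}$, that allows this absorption, which is precisely why this estimate is available only after Theorem~\ref{Th1.main}. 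The term $C\|\Nx\Dt u\|^2_{L^2}$ is absorbed in the same way. One thus arrives at $\frac d{dt}\Cal F(t)+\frac\gamma2\|\Dx\Dt u\|^2_{L^2}\le\Psi(t)$, where $\Psi(t)\le Q(\|\xi_u(0)\|_{\Cal E_1})e^{-\alpha t}+Q(\|h\|_{L^2})$ by Theorem~\ref{Th1.main}.

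Finally, to obtain a genuinely dissipative estimate I would use $\|\Dx\Dt u\|^2_{L^2}\ge c\|\Nx\Dt u\|^2_{L^2}$ together with the elementary inequality $\tfrac12\|\Nx\Dt u\|^2_{L^2}\ge\Cal F(t)-\|\Dx u\|^2_{L^2}-\|h\|^2_{L^2}$ and the dissipative control of $\|\Dx u(t)\|^2_{L^2}$ furnished by Theorem~\ref{Th1.main}, to reach $\frac d{dt}\Cal F(t)+\beta\Cal F(t)\le\widetilde\Psi(t)$ with $\widetilde\Psi$ again of the form $Q(\|\xi_u(0)\|_{\Cal E_1})e^{-\alpha t}+Q(\|h\|_{L^2})$; a Gronwall-type inequality then gives the pointwise part of \eqref{1.e1dis}, and the integral term $\int_t^{t+1}\|\Dt u(s)\|^2_{H^2}\,ds$ follows by integrating the inequality of the previous step over $[t,t+1]$ and invoking the elliptic estimate $\|\Dt u\|_{H^2}\le C\|\Dx\Dt u\|_{L^2}$ valid on $H^2(\Omega)\cap H^1_0(\Omega)$. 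Existence of a solution $\xi_u\in C(\R_+,\Cal E_1)$ is the standard Galerkin construction closed by these a priori bounds (with continuity in $\Cal E_1$ obtained by the usual energy arguments), and uniqueness is already contained in Proposition~\ref{Prop1.unique}, since a strong $\Cal E_1$-solution is in particular an $\Cal E$-solution. The one genuinely delicate point I expect is the treatment of the term $(f'(u)\Nx u\,\Dt u,\Nx\Dt u)$ and the accompanying bookkeeping; everything else repeats computations already carried out for Theorem~\ref{Th1.main}.
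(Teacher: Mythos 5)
Your proposal is correct and follows essentially the same route as the paper: multiply \eqref{0.1} by $-\Dt\Dx u$, use the dissipative $H^2$-bound of Theorem \ref{Th1.main} together with the embedding $H^2(\Omega)\subset C(\overline\Omega)$ and the interpolation $\|v\|^2_{H^1}\le C\|v\|_{L^2}\|v\|_{H^2}$ to absorb everything into $\gamma\|\Dt\Dx u\|^2_{L^2}$, and close with a Gronwall argument plus Proposition \ref{Prop1.unique} for uniqueness and Galerkin for existence. The only (harmless) deviation is that you integrate the nonlinear terms by parts, so $f'(u)$ and $g'(u)$ appear and must be controlled via continuity and the $L^\infty$-bound on $u$, whereas the paper estimates $(f(u)\Dt u,\Dt\Dx u)$ and $(g(u),\Dt\Dx u)$ directly and never needs $f'$.
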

\begin{proof} We restrict ourselves to the formal derivation of the dissipative estimate
\eqref{1.e1dis}. The existence of a solution as well as the justification of this
derivation can be done in a standard way using, e.g., Galerkin approximations.
Moreover, due to \eqref{1.4}, we only need to obtain the control over the higher norms
 of the derivative $\Dt u$. To this end, we multiply equation \eqref{0.1} by $-\Dt\Dx u$.
 Then, after some transformations, we get
\begin{multline}\label{1.dt}
\frac12\frac d{dt}\|\xi_u\|^2_{\Cal E_1}+\|\xi_u\|^2_{\Cal E_1}+\gamma\|\Dt\Dx u\|^2_{L^2}=
(f(u)\Dt u,\Dt\Dx u)+\\+(g(u),\Dt \Dx u)+\|\Dt\Nx u\|^2_{L^2}+\|\Dx u\|^2_{L^2}\le
C(\|\Dt u\|^2_{L^2}+\|\Dx u\|^2_{L^2})+\frac\gamma2\|\Dt\Dx u\|^2_{L^2},
\end{multline}
where we have implicitly used that $H^2(\Omega)\subset C(\Omega)$ and the interpolation
$\|v\|_{H^1}^2\le C\|v\|_{L^2}\|v\|_{H^2}$.
The obtained estimate gives
\begin{equation}
\frac d{dt}\|\xi_u\|^2_{\Cal E_1}+\|\xi_u\|^2_{\Cal E_1}+
\gamma\|\Dt \Dx u\|^2_{L^2}\le C(\|\Dt u\|^2_{L^2}+\|\Dx u\|^2_{L^2})
\end{equation}
and the Gronwall inequality together with \eqref{1.4} finishes the proof of the proposition.
\end{proof}

\section{A global attractor}\label{s2}
In this section, we study the long-time behavior of solutions of the problem \eqref{0.1}
in terms of the associated global attractor.
 For the reader convenience, we first remind the key definitions of the attractors theory,
 see \cite{BV,tem} for more details.
 \par
 According to Proposition \ref{Prop1.e1dis}, the solution operators of the problem \eqref{0.1}
 generate a semigroup in the phase space $\Cal E_1$
\begin{equation}\label{2.sem}
S(t)\xi_u(0):=\xi_u(t),\ \ S(t):\Cal E_1\to\Cal E_1,\ \ S(t+h)=S(t)\circ S(h),\ t,h\ge0.
\end{equation}
Moreover, according to the estimate \eqref{1.e1dis}, the semigroup
$S(t)$ is {\it dissipative} in the phase space $\Cal E_1$, i.e., the
estimate
\begin{equation}\label{2.dis}
\|S(t)\xi\|_{\Cal E_1}\le Q(\|\xi\|_{\Cal E_1})e^{-\alpha t}+Q(\|h\|_{L^2}),\ \
\xi\in\Cal E_1
\end{equation}
holds for some positive constant $\alpha$ and monotone function $Q$.

\begin{definition}\label{Def2.set} Let $S(t):\Cal E_1\to\Cal E_1$ be a semigroup.
A set $\Cal B\subset\Cal E_1$ is called an attracting set for this semigroup if
for every bounded
set $B\subset\Cal E_1$ and every neighborhood $\Cal O(\Cal B)$ of the set
$\Cal B$, there exists $T=T(B,\Cal O)$ such that
\begin{equation}
S(t)B\subset \Cal O(\Cal B)
\end{equation}
for all $t\ge T$.
\end{definition}

\begin{definition}\label{Def2.attr} Let $S(t):\Cal E_1\to\Cal E_1$ be a semigroup.
A set $\Cal A$ is called a global attractor for the semigroup $S(t)$
if
\par
1. The set $\Cal A$ is compact in $\Cal E_1$;
\par
2. The set $\Cal A$ is strictly invariant: $S(t)\Cal A=\Cal A$ for all $t\ge0$;
\par
3. The set $\Cal A$ is an attracting set for the semigroup $S(t)$.
\end{definition}
To verify the existence of a global attractor, we will use the following version of an
abstract attractor existence theorem.

\begin{proposition}\label{Prop2.attr} Let $S(t):\Cal E_1\to\Cal E_1$
be a semigroup satisfying the following two assumptions:
\par
1.\ There exists a compact attracting set $\Cal B$ for the semigroup $S(t)$;
\par
2. \ For every $t\ge0$, the map $S(t):\Cal E_1\to\Cal E_1$  has a closed graph in $\Cal E_1\times\Cal E_1$.
\par
Then, this semigroup possesses a global attractor $\Cal A\subset\Cal B$ which is generated by all complete bounded trajectories:
\begin{equation}
\Cal A=\Cal K\big|_{t=0},
\end{equation}
where $\Cal K\subset L^\infty(\R,\Cal E_1)$ is a set of functions $u:\R\to\Cal E_1$
such that $S(t)u(h)=u(t+h)$ for all $h\in\R$ and~$t\ge0$.
\end{proposition}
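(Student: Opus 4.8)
The plan is to build the attractor as the $\omega$-limit set of the given attracting set, following the classical scheme, with one structural adaptation: since $S(t)$ is not assumed continuous, every ``pass to the limit inside $S(t)$'' step will be justified by the closed graph hypothesis, via the recurring pattern \emph{if $x_n\to x$ and $S(t)x_n\to y$ in $\Cal E_1$, then $S(t)x=y$}. Concretely, I would set
$$
\Cal A:=\omega(\Cal B)=\bigcap_{s\ge0}\,\overline{\bigcup_{t\ge s}S(t)\Cal B}\,.
$$
The first thing to establish is an \emph{asymptotic compactness} lemma: for every bounded $B\subset\Cal E_1$, every sequence $S(t_n)b_n$ with $t_n\to\infty$ and $b_n\in B$ has a subsequence converging to a point of $\Cal B$. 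This follows because $\Cal B$ is attracting, so $\mathrm{dist}_{\Cal E_1}(S(t_n)b_n,\Cal B)\to0$; choosing nearest points $y_n\in\Cal B$ (the infimum is attained, $\Cal B$ being compact) and extracting a convergent subsequence of $\{y_n\}$ by compactness of $\Cal B$, the corresponding subsequence of $\{S(t_n)b_n\}$ converges to the same limit, which lies in $\Cal B$. A routine contradiction argument then gives that $\omega(B)$ is nonempty, closed, contained in $\Cal B$, and attracts $B$, for every bounded $B$. In particular $\Cal A=\omega(\Cal B)$ is a nonempty closed subset of the compact set $\Cal B$, hence compact, and it attracts $\Cal B$.

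Next I would upgrade ``$\Cal A$ attracts $\Cal B$'' to ``$\Cal A$ attracts every bounded set'', for which it suffices to prove $\omega(B)\subset\Cal A$ for every bounded $B$. Given $y\in\omega(B)$, say $S(t_n)b_n\to y$ with $t_n\to\infty$, I fix $m\ge0$ and write, for $n$ large, $S(t_n)b_n=S(m)z_n$ with $z_n:=S(t_n-m)b_n$; since $t_n-m\to\infty$, the asymptotic compactness lemma gives a subsequence with $z_{n_j}\to w_m\in\Cal B$, while $S(m)z_{n_j}=S(t_{n_j})b_{n_j}\to y$, so the closed graph of $S(m)$ forces $S(m)w_m=y$. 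Hence $y\in S(m)\Cal B$ for every $m\ge0$, i.e. $y\in\Cal A$; combined with the fact that $\omega(B)$ attracts $B$, this yields that $\Cal A$ attracts $B$. This is the step I expect to be the genuine obstacle: without continuity of $S(t)$ the naive argument ``flow $B$ into a neighbourhood of $\Cal B$, then use that $\Cal A$ attracts $\Cal B$'' breaks down, and it is precisely here — and in the invariance proof — that the closed graph assumption is indispensable.

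For strict invariance $S(t)\Cal A=\Cal A$ I would use the same device. For $S(t)\Cal A\subset\Cal A$: if $a\in\Cal A$ with $S(s_n)c_n\to a$, $s_n\to\infty$, $c_n\in\Cal B$, then $S(t+s_n)c_n=S(t)(S(s_n)c_n)$ subconverges (asymptotic compactness, $t+s_n\to\infty$) to some $b\in\omega(\Cal B)=\Cal A$, and the closed graph of $S(t)$ gives $S(t)a=b$. For $\Cal A\subset S(t)\Cal A$: with the same $a$, write $S(s_n)c_n=S(t)(S(s_n-t)c_n)$ for $s_n>t$; the points $d_n:=S(s_n-t)c_n$ subconverge to some $d\in\omega(\Cal B)=\Cal A$, and the closed graph gives $S(t)d=a$, i.e. $a\in S(t)\Cal A$. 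Thus $\Cal A$ is compact, strictly invariant, and attracts all bounded sets, so it is a global attractor contained in $\Cal B$.

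Finally I would check the representation $\Cal A=\Cal K|_{t=0}$. If $u\in\Cal K$, then $u(0)=S(t)u(-t)$ for all $t\ge0$ with $\{u(-s):s\ge0\}$ bounded, so the constant sequence $S(n)u(-n)=u(0)$ shows $u(0)\in\omega(\{u(-s):s\ge0\})\subset\Cal A$. Conversely, given $a\in\Cal A$, the identity $S(1)\Cal A=\Cal A$ lets me choose inductively $a_{-k}\in\Cal A$ with $S(1)a_{-k}=a_{-(k-1)}$ and $a_0:=a$; then $u(t):=S(t+k)a_{-k}$ for any $k$ with $t\ge-k$ is well defined by the semigroup property of $S$, has range in the compact set $\Cal A$, hence $u\in L^\infty(\R,\Cal E_1)$, and satisfies $S(t)u(h)=u(t+h)$ and $u(0)=a$, so $a\in\Cal K|_{t=0}$. (Uniqueness of the global attractor, if needed, would follow at once: any other global attractor is bounded and invariant, hence attracted by $\Cal A$ and therefore contained in it, and symmetrically contains $\Cal A$.)
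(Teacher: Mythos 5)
Your argument is correct and complete: the closed-graph hypothesis is invoked exactly where continuity would normally be used (invariance of $\omega(\Cal B)$ and attraction of arbitrary bounded sets), and the representation via complete bounded trajectories is handled properly. Note that the paper itself gives no proof of this proposition but refers to Pata--Zelik \cite{PZ}, whose argument is essentially the scheme you reconstructed ($\Cal A=\omega(\Cal B)$ for the compact attracting set, with closedness of $S(t)$ replacing continuity at each passage to the limit), so your proposal matches the cited approach rather than offering a genuinely different route.
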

For the proof of this proposition, see  \cite{PZ}.
\par
We are now ready to state and prove the main result of this section.
\begin{theorem}\label{Th2.main} Suppose that the conditions of the Theorem \ref{Th0.main}
 are satisfied. Then the  semigroup $S(t)$ associated with problem \eqref{0.1}
possesses a global attractor $\Cal A$ in the phase space $\Cal E_1$.
\end{theorem}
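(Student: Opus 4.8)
The plan is to apply the abstract attractor existence theorem, Proposition \ref{Prop2.attr}, to the semigroup $S(t):\Cal E_1\to\Cal E_1$ defined in \eqref{2.sem}. Two ingredients are needed: a compact attracting set in $\Cal E_1$, and the closed-graph property of each map $S(t)$. The closed-graph property is the easy part: it follows immediately from the Lipschitz continuity estimate \eqref{1.lip} in Proposition \ref{Prop1.unique}, since if $\xi_n\to\xi$ in $\Cal E_1$ and $S(t)\xi_n\to\eta$ in $\Cal E_1$, then $S(t)\xi_n\to S(t)\xi$ in the weaker norm $\Cal E$ by \eqref{1.lip} (the constants $C,K$ there depend only on the $\Cal E_1$-norm of the data, which is bounded along a convergent sequence), hence $\eta=S(t)\xi$.

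The main work is the construction of a compact attracting set. Here I would use the standard decomposition of solutions combined with the asymptotic-smoothing idea. By the dissipative estimate \eqref{1.e1dis} of Proposition \ref{Prop1.e1dis}, the ball $\Cal B_R$ in $\Cal E_1$ of a sufficiently large radius $R$ (depending on $\|h\|_{L^2}$) is a bounded absorbing set: for every bounded $B\subset\Cal E_1$ there is $T(B)$ with $S(t)B\subset\Cal B_R$ for $t\ge T(B)$. It therefore suffices to produce a compact set in $\Cal E_1$ that attracts $\Cal B_R$. To this end I would split the solution $u=w+z$ (for initial data in $\Cal B_R$) where $w$ solves the linear strongly damped equation $\Dt^2 w+f(u)\Dt w-\gamma\Dt\Dx w-\Dx w=0$ with the same initial data and $z$ solves the remainder equation driven by $h-g(u)$ with zero initial data; the coefficient $f(u)$ and the right-hand side $g(u)$ are controlled in $C(\Omega)$ uniformly via \eqref{1.e1dis} and the embedding $H^2(\Omega)\subset C(\Omega)$. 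Multiplying the $w$-equation by $-\Dt\Dx w$ and using $f(u)\ge -C$ (from \eqref{0.2}) gives exponential decay of $\|\xi_w(t)\|_{\Cal E_1}$ as in \eqref{1.dt}. For $z$ one gains regularity: since $\Dt u\in L^2_{loc}(\R_+;H^2)$ with the integral bound in \eqref{1.e1dis}, one checks that $g(u)$ and $f(u)\Dt u$ lie in $L^\infty(\R_+;H^{1}_0)$ on the absorbing ball (using $g\in C^1$, $f\in C^1$, $u\in C(\Omega)$, $\Nx u\in L^\infty$, and the integral control of $\Dt u$ in $H^2$), and then a higher-order energy estimate for the $z$-equation — multiplying by $-\Dt\Dx^2 z$ or equivalently working one derivative above $\Cal E_1$ — yields a uniform bound for $\xi_z(t)$ in $\Cal E_2:=[H^3\cap H^1_0]\times H^2$, a space compactly embedded in $\Cal E_1$. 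Hence the set $\overline{\bigcup_{t\ge 1}\xi_z(t)}$ together with the decaying $w$-part shows that a bounded ball of $\Cal E_2$ is a compact (in $\Cal E_1$) attracting set.

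The hardest point is the uniform $\Cal E_2$-bound for $z$: one must verify that the nonlinear terms $g(u)$ and $f(u)\Dt u$ really are bounded (in $t$, uniformly over $\Cal B_R$) in $H^1_0(\Omega)$, which requires combining the $L^\infty(\R_+;H^2)$-bound on $u$ with the $L^2_{loc}(\R_+;H^2)$-bound on $\Dt u$ from \eqref{1.e1dis}, and then running a smoothing estimate for the strongly damped linear operator $\Dt^2 z+f(u)\Dt z-\gamma\Dt\Dx z-\Dx z=$ (a controlled $H^1_0$ source) — the parabolic-type regularization coming from the $-\gamma\Dt\Dx$ term is what lifts $z$ one full Sobolev order. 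Once the compact attracting set $\Cal B$ is in hand, Proposition \ref{Prop2.attr} directly gives the global attractor $\Cal A=\Cal K|_{t=0}\subset\Cal B$, completing the proof of Theorem \ref{Th2.main}.
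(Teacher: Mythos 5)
Your overall strategy (closed graph via Proposition \ref{Prop1.unique} plus a compact attracting set via a splitting, then Proposition \ref{Prop2.attr}) is the right one and matches the paper, but the particular decomposition you chose breaks down at two points. First, your ``decaying'' component $w$ solves $\Dt^2 w+f(u)\Dt w-\gamma\Dt\Dx w-\Dx w=0$, and you claim exponential decay of $\|\xi_w\|_{\Cal E_1}$ from $f(u)\ge -C$. This is not justified: the whole point of the paper is that the monotonicity assumption \eqref{0.grad} ($f\ge0$) is \emph{not} imposed, so $f(u)$ may be negative on a set where $|u|$ is small (e.g.\ the Van der Pol nonlinearity $f(u)=u^3-u$, or the case $p=0$ where $f$ can be a negative constant). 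The negative damping contributes a source term of size $C\|\Dt w\|^2_{L^2}$, which the strong damping $\gamma\|\Nx\Dt w\|^2_{L^2}\ge\gamma\lambda_1\|\Dt w\|^2_{L^2}$ absorbs only if $C<\gamma\lambda_1$ --- not guaranteed by \eqref{0.2}. Already for the frozen coefficient $f\equiv -C$ with $\gamma\lambda_1<C$ the lowest Fourier mode of your $w$-equation grows exponentially, so no multiplier argument can rescue the claimed decay. The dissipativity of the full problem comes from the interplay of $f$ and $g$ through the multiplier $\Dt u-\gamma\Dx u+F(u)$ (Theorem \ref{Th1.main}), and this interplay is destroyed once $g$ is removed from the equation for $w$. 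The paper avoids the issue by keeping the exponentially decaying component \emph{purely linear} (equation \eqref{2.linear}) and dumping the entire nonlinearity $-f(u)\Dt u-g(u)$ into the zero-data component, whose $\Cal E_2$-bound only requires the source to be bounded in $H^1$, estimate \eqref{2.last}, not to be small or sign-definite.

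Second, you place the external force $h$ in the smoothing component $z$ (source $h-g(u)$) and claim a uniform bound for $\xi_z$ in $\Cal E_2=[H^3\cap\dots]\times[H^2\cap H^1_0]$. Since $h$ is only in $L^2(\Omega)$, it is not in $H^1_0$ and the right-hand side of your $z$-equation cannot be controlled in $H^1$; the parabolic smoothing of $-\gamma\Dt\Dx$ then lifts $z$ at best to the $H^2$ level, not to $\Cal E_2$, so your compact attracting set does not materialize. (Also, multiplying by $\Dt\Dx^2 z+\beta\Dx^2 z$ needs $\Dx z\big|_{\partial\Omega}=0$, which fails when the source contains $h$.) The paper's remedy is exactly tailored to this: $h$ is kept in the linear part \eqref{2.linear}, the stationary solution $H$ of $-\Dx H=h$ (only $H\in H^2$) is subtracted so that $\xi_v-\xi_H$ decays exponentially (Lemma \ref{Lem2.conv}), and the attracting set is the translate $\Cal B=\xi_H+\{\|\xi_w\|_{\Cal E_2}\le R\}$, which is compact in $\Cal E_1$ even though $\xi_H$ itself has no extra regularity. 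To repair your argument you would have to (i) remove $f(u)\Dt w$ from the decaying equation, i.e.\ use the purely linear homogeneous equation there, and (ii) move $h$ out of the smoothing component, e.g.\ by the same subtraction of $H$ --- at which point you recover the paper's proof.
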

\begin{proof} Indeed, the second assumption of Proposition \ref{Prop2.attr} is an
immediate corollary of Proposition \ref{Prop1.unique}, so we only need
to check the first one. To this end, we split a solution $u$ of equation \eqref{0.1}
in a sum $u(t):=v(t)+w(t)$, where the
 function $v$ solves the linear problem:
 \begin{equation}\label{2.linear}
 \begin{cases}
 \Dt^2v-\gamma\Dt\Dx v-\Dx v=h,\ \ v\big|_{\partial\Omega}=0,\\
\xi_v\big|_{t=0}=\xi_u\big|_{t=0}
\end{cases}
 \end{equation}
 and the reminder $w$ satisfies
 \begin{equation}\label{2.w}
 \begin{cases}
 \Dt^2w-\gamma\Dt\Dx w-\Dx w=-f(u)\Dt u-g(u),\ \ \ \ w\big|_{\partial\Omega}=0,\\
\xi_w\big|_{t=0}=0.
\end{cases}
 \end{equation}
 Moreover, without loss of generality, we may assume that $g(0)=0$.
  The properties of functions $v$ and $w$
  are collected in the following two lemmas.
 \begin{lemma}\label{Lem2.conv} Let the above assumptions hold and let
 $H=H(x)\in H^2(\Omega)\cap H^1_0(\Omega)$ be the solution of the
 problem
 $$
 -\Dx H=h, \ x \in \Omega; \ u\big|_{\partial\Omega}=0.
$$
 Then, the following estimate
holds:
 \begin{equation}\label{2.decay}
 \|\xi_v(t)-\xi_H\|_{\Cal E_1}\le C(\|\xi_u(0)\|_{\Cal E_1}+\|h\|_{L^2})e^{-\alpha t},
 \end{equation}
 where $\xi_H=(H,0)$ and positive constants $C$ and $\alpha$ are independent of $u$.
 \end{lemma}
 \begin{proof}[Proof of the Lemma] Indeed, introducing the new variable $\tilde v(t):=v(t)-H$, we reduce \eqref{2.linear} to the homogeneous form
 \begin{equation}
 \begin{cases}
 \Dt^2 \tilde v-\gamma\Dt\Dx \tilde v-\Dx \tilde v=0,\ \ \tilde v\big|_{\partial\Omega}=0,\\
\xi_{\tilde v}\big|_{t=0}=\xi_u\big|_{t=0}-\xi_H.
\end{cases}
 \end{equation}
 Multiplying this equation by $\Dt\Dx\tilde v+\beta\Dx\tilde v$, where $\beta$ is a small
 positive parameter, and arguing in a standard way
 we derive that
 \begin{equation}
 \|\xi_{\tilde v}(t)\|^2_{\Cal E_1}\le C\|\xi_{\tilde v}(0)\|^2_{\Cal E_1}e^{-\alpha t}
 \end{equation}
 for some positive $C$ and $\alpha$, see e.g., \cite{BV,tem} as well as the proof of
 Lemma \ref{Lem2.comp} below. The desired estimate \eqref{2.decay} is an immediate
 corollary of this estimate and Lemma \ref{Lem2.conv} is proved.
 \end{proof}
Thus, we have proved that the $v$ component of the solution $u$ converges
exponentially to a single function $H\in H^2(\Omega)$ which is independent of time
and the initial data. The next lemma shows that the $w$ component is more regular.
\begin{lemma}\label{Lem2.comp} Let the above assumptions hold and let
\begin{equation}
\Cal E_2:=[H^3(\Omega)\cap\{u\big|_{\partial\Omega}=\Dx u\big|_{\partial\Omega}=0\}]
\times [H^2(\Omega)\cap H^1_0(\Omega)].
\end{equation}
Then the solution $w$ of problem \eqref{2.w} belongs to $\Cal E_2$ for all $t\ge0$ and
the following estimate holds:
\begin{equation}\label{2.wcomp}
\|\xi_w(t)\|_{\Cal E_2}\le Q(\|\xi_u(0)\|_{\Cal E_1})e^{-\alpha t}+Q(\|h\|_{L^2}),
\end{equation}
for some positive constant $\alpha$ and monotone function $Q$ which are independent of $u$.
\end{lemma}
\begin{proof}[Proof of the Lemma] We give below only the formal derivation of estimate
\eqref{2.wcomp} which can be justified e.g., using the Galerkin approximations.
First, due to the assumption $g(0)=0$, it follows from the equation \eqref{2.w} that
at least formally $\Dx w\big|_{\partial\Omega}=0$, so we may multiply equation
\eqref{2.w} by $\Dt\Dx^2 w+\beta \Dx^2 w$ and do integration by parts. This gives
\begin{multline}\label{2.huge}
\frac d{dt}\(\frac 12\|\xi_w\|^2_{\Cal E_2}+\beta(\Nx\Dx w,\Nx \Dt w)+
\frac{\gamma\beta}2\|\Nx\Dx w\|^2_{L^2}\)+\beta\|\Nx\Dx w\|^2_{L^2}-
\beta\|\Dt\Dx w\|^2_{L^2}+\\+\gamma\|\Dt\Nx\Dx w\|^2_{L^2}=(\Nx(f(u)\Dt u+g(u)),
\Nx(\Dt\Dx w+\beta\Dx w)).
\end{multline}
Fixing $\beta>0$ small enough and using the notation
$$E_2(w):=\frac 12\|\xi_w\|^2_{\Cal E_2}+\beta(\Nx\Dx w,\Nx \Dt w)+
\frac{\gamma\beta}2\|\Nx\Dx w\|^2_{L^2}$$ we see that, on the one hand,
\begin{equation}\label{2.equiv}
C_1\|\xi_w\|_{\Cal E_2}^2\le E_2(w)\le C_2\|\xi_w\|^2_{\Cal E_2}
\end{equation}
for some positive constants $C_1$ and $C_2$. On the other hand, the
equation \eqref{2.huge} implies that
\begin{equation}\label{2.gron}
\frac d{dt}E_2(w)+\alpha E_2(w)\le C(\|f(u)\Dt u\|^2_{H^1}+\|g(u)\|^2_{H^1})
\end{equation}
for some positive constants $C$ and $\alpha$. Finally, using the embedding
$H^2(\Omega)\subset C(\Omega)$ and growth restrictions \eqref{0.2},
we estimate the right-hand side of \eqref{2.gron} as follows:
\begin{equation}\label{2.last}
\|f(u)\Dt u\|^2_{H^1}+\|g(u)\|^2_{H^1}\le C(\|u\|_{H^2}^{p+q+1}+1)(\|\Dt\Nx u\|^2_{L^2}+1).
\end{equation}
Applying the Gronwall inequality to \eqref{2.gron} and using
\eqref{2.last} and \eqref{2.equiv} together with the dissipative estimate \eqref{1.e1dis},
we derive the desired estimate \eqref{2.wcomp} and finish the proof of
Lemma \ref{Lem2.comp}.
\end{proof}
It is not difficult now to finish the proof of the theorem. Indeed,
 Lemmas \ref{Lem2.conv} and \ref{Lem2.comp} show that the set
\begin{equation}
\Cal B:=\xi_H+\{w\in \Cal E_2,\ \|\xi_w\|_{\Cal E_2}\le R\}
\end{equation}
will be a compact attracting set for the semigroup $S(t)$ generated
by the problem \eqref{0.1} if $R$ is large enough. Thus, all
assumptions of the Proposition \ref{Prop2.attr} are verified and the
theorem is proved.
\end{proof}

\begin{remark} As already it was mentioned in the introduction, we do not know how to
deduce the basic dissipative estimate for the {\it weak} solutions
of the  problem \eqref{0.1} in the phase space $\Cal E$ in the case when the condition
\eqref{0.grad} is violated. However, as follows from the Theorem \ref{Th1.main},
we have such an estimate in the intermediate space
$[H^2(\Omega)\cap H^1(\Omega)]\times L^2(\Omega)$ which is in a sense natural
for strongly damped wave equations, see \cite{PZ06,KaZ08}.
 Actually, the problem is well posed in this space and the above developed
  attractor theory can be extended to to this phase space as well.
\end{remark}


\end{document}